\newtheorem{thm}{Theorem}
\newtheorem{lem}{Lemma}
\newtheorem{definition}{Definition}
\newtheorem{remark}{Remark}
\newcommand{\sabs}[1]{\left|#1\right|}
\newcommand{\sparen}[1]{\left(#1\right)}
\newcommand{\norm}[1]{\sabs{\sabs{#1}}}
\numberwithin{equation}{section}
\title{ Stability Analysis for Magnetic Resonance Elastography\thanks{\footnotesize
This work was supported by ERC Advanced Grant Project MULTIMOD--267184.}}
\date{September 12, 2014}
\author{Habib Ammari\thanks{\footnotesize Department of
Mathematics and Applications, Ecole Normale Sup\'erieure, 45 Rue d'Ulm, 75005 Paris, France
(habib.ammari@ens.fr, alden.waters@ens.fr,  hai.zhang@ens.fr).} \and
Alden Waters\footnotemark[2] \and Hai Zhang\footnotemark[2]
}
\begin{document}
\maketitle

\begin{abstract}
We consider the inverse problem of finding unknown elastic parameters  from internal measurements of displacement fields for tissues. The measurements are made on the entirety of a smooth domain. Since tissues can be modeled as quasi-incompressible fluids, we examine the Stokes system and consider only the recovery of shear modulus distributions.  Our main result is to establish Lipschitz stable estimates on the shear modulus distributions from internal measurements of displacement fields.  These estimates imply convergence of a numerical scheme known as the Landweber iteration scheme for reconstructing the shear modulus distributions.
\end{abstract}

\section{Introduction}
We consider the problem of Magnetic Resonance Elastography (MRE). Using internal measurements of time-harmonic displacement fields offers the possibility of a high-resolved reconstruction of shear modulus distributions, in MRE; see \cite{zhou}. High resolution is important in the detection of cancerous anomalies in their early stages \cite{ammaribio}.

In this paper, we provide stability estimates for reconstructing the shear modulus
from internal measurements of displacement fields. For doing so, we first reduce the time-harmonic elasticity system to the Stokes system. Then we will follow the general approach in \cite{bgelliptic}. Our other main references are \cite{steinhauer, MS, solono, otmar}. See \cite{ammari_book_elas, garapon, zhou,gen1,gen2,seo,seo_book,gen3} for recent works on the inverse problem in MRE. For recent books and reviews on other inverse problems from internal measurements we refer to \cite{ammaribio, kuchment, nachman, seo_book}.

From a mathematical standpoint, inverse problems from internal measurements typically involve more measurements than parameters of interest and allow the recovery of the interesting parameters from an often redundant system of partial differential equations. Many times exact algebraic inversions may not be available. This paper addresses the specific problem of the Stokes system.  We reduce the problem of finding the shear modulus to the inversion of an over-determined system of partial differential equations. We
prove that under certain hypotheses, this system is elliptic and satisfies
the Lopatinskii condition.  Moreover, we show that in three dimensions additional internal measurements are needed. For other stability results on inverse problems from internal data for scalar equations we refer the reader to \cite{wenjia, laurent,  triki, honda, steinhauer, triki2}.

Ultimately the goal of examining Stokes system is their use in MRE to detect cancerous anomalies. We use the stability analysis of elliptic systems to prove the convergence of a numerical scheme known as the Landweber iteration scheme. This iteration scheme has already shown success in other simpler models \cite{laure}.

The paper is organized as follows. In section \ref{sec2} we derive the Stokes system from the elasticity equations as the compressional modulus goes to infinity. In section \ref{sec3} we provide some preliminary results on over-determined systems of partial differential equations.  Section \ref{sec-estimate-3d} is to prove the stability result of reconstructing the shear modulus from internal measurements of the displacement field in three dimensions.
In  section \ref{sec-estimate-2d}, the two dimensional case is considered.
In
section \ref{secfinal} we state the convergence result of the corresponding Landweber scheme.
The paper ends with some concluding remarks.

\section{Derivation of Stokes System from Elasticity Equations} \label{sec2}
Let $\Omega$ denote a simply-connected compact and smooth domain in $\mathbb{R}^d$ where $d=2, 3$ with $\mathcal{C}^\infty$-boundary $\partial\Omega$. We consider
\begin{align*}
f(x)=(f_1(x),f_2(x),\ldots,f_d(x)):\Omega\rightarrow \mathbb{R}^d.
\end{align*}
 We use the Einstein summation convention for the rest of this paper.

For two matrices $A$ and $B$, we let
 \begin{align*}
A:B=a_{ij}b_{ij}.
\end{align*}
We define the Hilbert spaces $H^{m}(\Omega)^d$ for $m\in \mathbb{N}$, as the completion of the space of $f(x)\in \mathcal{C}_c^{\infty}(\Omega)^d$ such that
\begin{align*}
\sum\limits_{i=1}^m \int\limits_{\Omega}\sparen{\nabla^i f(x):\nabla^i f(x)+|f(x)|^2}\,dx<\infty .
\end{align*}
We write $|\nabla f|^2=\nabla f:\nabla f$ from now on.
For any $u\in H^{1}(\Omega)^d$, we let
\begin{align*}
2\nabla^s u=\nabla u+(\nabla u)^t,
\end{align*}
where $\nabla u$ is the matrix $(\partial_ju_i)_{i,j=1}^d$ with $u_i$ as the $i$-th component of $u$, and the superscript $t$ denotes the transpose.
Let $\mu(x)\in \mathcal{C}^1(\Omega)$, then we define the conormal derivative
\begin{align*}
2\frac{\partial u}{\partial \nu}=\mu(x)\sparen{\nabla u+(\nabla u)^t}n,
\end{align*}
where $n$ is the outward unit normal to the boundary $\partial\Omega$.

%We consider the following transmission problem associated to the system with the Dirichlet boundary conditions
%\begin{align}
%\left\{
%\begin{array}{lr}
%\partial_{x_j}\sparen{C_{ijkl}\frac{\partial u_{k,\lambda}}{\partial x_l}}+\omega^2u_{i,\lambda}=0  \quad \mathrm{in}\,\, \Omega ,\\ \\
%u_{\lambda}(x)=F(x) \,\quad \mathrm{on} \,\quad \partial\Omega, \\
%\end{array}
%\right.
%\end{align}
%where the elasticity tensor is given by
%\begin{align}
%C_{ijkl}=\lambda(x)\delta_{ij}\delta_{kl}+\mu(x)(\delta_{ik}\delta_{jl}+\delta_{il}\delta_{jk})
%\end{align}
%for $\mu(x)$, and $\lambda(x)$ in the following admissible classes.
%We define the admissible class of $\mu(x)$ to be
%\begin{align}
%A^l_{\mu}=\{\mu: \mu(x)\in \mathcal{C}^l(\Omega), \exists \mu_0 \,\, \mathrm{s.\, t.}\, 0<\mu_0 \leq m\}
%\end{align}
%and the admissible class of $\lambda(x)$ to be
%\begin{align}
%A^l_{\lambda}=\{\lambda: \lambda(x)\in \mathcal{C}^l(\Omega), \exists \lambda_0 \,\, \mathrm{s.\, t.}\, 0<\lambda_0 \leq M\},
%\end{align}
%where $l$ is some integer greater than $1$ depending on the desired regularity of the solution. $M$ and $m$ are finite constants. In practice $M$ is large and $m$ is small.  We need at least one derivative for the solution to be well-posed.

We consider the following boundary value problem for the elasticity equations
\begin{align}\label{model}
\left\{
\begin{array}{lr}
\nabla(\lambda(x)\nabla\cdot u_{\lambda})+\omega^2 u_{\lambda}(x)+2\nabla\cdot\mu(x)\nabla^s u_{\lambda}(x)=0 \,\quad \mathrm{in}\,\, \Omega ,\\ \\
u_{\lambda}(x)=F(x) \,\quad \mathrm{on} \,\quad \partial\Omega \\
\end{array}
\right.
\end{align}
with $\mu(x), \lambda(x) \in \mathcal{C}^1(\bar{\Omega})$ the Lam\'{e} coefficients
(respectively, the shear and the compressional modulus) satisfying
\begin{align}
\label{assumpmin}
& \lambda\geq \lambda_{min}= \min\{\lambda(x): x\in \bar{\Omega}\} >0, \\
& \mu\geq \mu_{min}= \min\{\mu(x): x\in \bar{\Omega}\} >0.\label{assumpmin2}
\end{align}

The solution $u_{\lambda}(x)$ is such that
\begin{align*}
u_{\lambda}(x):\Omega\rightarrow \mathbb{R}^d .
\end{align*}
It is known that the solution $u_{\lambda}(x)$ exists and is unique. In particular,  $\nabla^su_{\lambda}(x)\in L^2(\Omega)^d$ if $F(x)\in H^{1/2}(\partial\Omega)$,
$\lambda, \mu \in L^\infty(\Omega)$ and satisfy (\ref{assumpmin}) and (\ref{assumpmin2})
and $\nabla^su_{\lambda}(x)\in H^4(\Omega)^d$ under the additional assumptions that $\mu(x), \lambda(x)\in \mathcal{C}^4(\bar{\Omega})$, $F\in H^{9/2}(\partial\Omega)^d$. We need the latter regularity assumption for later stability estimates in sections \ref{sec-estimate-3d} and \ref{sec-estimate-2d}.

%, but the proofs in this section only require the minimal regularity. We always assume that if $\Omega$ has a pair of Lam\'{e} coefficients $(\lambda,\mu)$ which are distinct then
%\begin{align}
%d\lambda+2\mu>0 .
%\end{align}

The Poisson ratio $\sigma$ of the anomaly is given in terms of the Lam\'{e} coefficients by
\begin{align*}
\sigma=\frac{\lambda/\mu}{1+2\lambda/\mu}.
\end{align*}
It is known in soft tissues $\sigma\approx1/2$ or equivalently $\lambda>>\mu$. This makes it difficult to reconstruct both parameters $\mu$ and $\lambda$ simultaneously \cite{manduca},\cite{fatemi}. Therefore we first construct asymptotic solutions to the problem (\ref{model}) when $\lambda_{\min}\rightarrow \infty$. We loosely follows \cite{garapon} and \cite{alimit} which consider piecewise constant Lam\'{e} coefficients. We show that in the limit, the elasticity equations (\ref{model}) reduces to the following Stokes system
\begin{align}\label{stokesdata}
\left\{
\begin{array}{lr}
\omega^2 u(x)+2\nabla\cdot\mu(x)\nabla^s u(x)+\nabla p(x)=0 \,\quad \mathrm{in}\,\, \Omega ,\\ \\
\nabla\cdot u(x)=0 \,\quad \mathrm{in} \,\quad \Omega , \\\\
u(x)=F(x) \,\quad \mathrm{on} \,\quad \partial\Omega ,\\ \\
\int\limits_{\Omega}p(x)\,dx=0.
\end{array}
\right.
\end{align}

%Once again, if we assume that $\mu(x)\in A_{\mu}^4$ then solution is again well posed with $\nabla^su(x)$ with rows in $H^4(\Omega)^d$ by \cite{LF2, LF1}. We call this regularity assumption (*). However for this section it is only necessary to have $\mu(x)\in A_{\mu}^1$.

%We will prove the following theorem
%which justifies the reduced and simpler model (\ref{stokesdata}).
\begin{thm} \label{thm1}
Suppose that $\omega^2$ is not an eigenvalue of the problem (\ref{stokesdata}) with $F(x)=0$, then there exists a positive constant $C$ which is independent of $\lambda$ such that the following error estimates hold for $\lambda_{min}$ large enough
\begin{align} \label{error-estimate}
\norm{u_{\lambda}-u}_{H^1(\Omega)^d}\leq \frac{C}{\sqrt{\lambda_{min}}}.
\end{align}
\end{thm}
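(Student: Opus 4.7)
The plan is to recast (\ref{model}) as a penalized Stokes system by introducing the effective pressure $p_\lambda := \lambda(x)\,\nabla\cdot u_\lambda$, so that $u_\lambda$ solves
\begin{equation*}
\omega^2 u_\lambda + 2\nabla\cdot\bigl(\mu\nabla^s u_\lambda\bigr) + \nabla p_\lambda = 0,\qquad \nabla\cdot u_\lambda = p_\lambda/\lambda,\qquad u_\lambda|_{\partial\Omega}=F.
\end{equation*}
Setting $(U,P):=(u_\lambda-u,\,p_\lambda-p)$, the pair solves a homogeneous Stokes momentum equation with zero Dirichlet data and non-zero divergence $\nabla\cdot U = p_\lambda/\lambda$. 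The target estimate thus reduces to showing $\|\nabla\cdot u_\lambda\|_{L^2(\Omega)}=O(\lambda_{min}^{-1/2})$ and then invoking a well-posedness estimate for the inhomogeneous Stokes problem.

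The first task is a $\lambda$-uniform bound $\|u_\lambda\|_{H^1(\Omega)^d}\le C\|F\|_{H^{1/2}(\partial\Omega)^d}$ for $\lambda_{min}$ large. Fix a divergence-free $H^1$-extension $u_0$ of $F$ (available since (\ref{stokesdata}) forces $\int_{\partial\Omega}F\cdot n = 0$) and set $\tilde u_\lambda := u_\lambda - u_0$. Testing the elasticity equation against $\tilde u_\lambda$, integrating by parts, and applying Korn's and Poincar\'e's inequalities (valid since $\tilde u_\lambda$ vanishes on $\partial\Omega$) yields the weighted energy identity
\begin{equation*}
\|\tilde u_\lambda\|_{H^1(\Omega)^d}^2 + \lambda_{min}\|\nabla\cdot\tilde u_\lambda\|_{L^2(\Omega)}^2 \le C\bigl(\omega^2\|\tilde u_\lambda\|_{L^2(\Omega)^d}^2 + \|F\|_{H^{1/2}(\partial\Omega)^d}^2\bigr).
\end{equation*}

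The genuine obstacle is absorbing the $\omega^2\|\tilde u_\lambda\|_{L^2}^2$ term on the right, and this is where the non-eigenvalue hypothesis enters essentially. I would handle it by a compactness/contradiction argument: if no uniform bound existed, one could extract a normalized sequence $v_n := u_{\lambda_n}/\|u_{\lambda_n}\|_{H^1}$ with boundary data tending to zero and, from the energy identity, $\|\nabla\cdot v_n\|_{L^2}=O(\lambda_n^{-1/2})$. The associated mean-zero pressures are bounded in $L^2$ by the Ne\v{c}as inequality, since their gradients are bounded in $H^{-1}$ by the momentum equation. Weak $H^1\times L^2$ limits and Rellich compactness produce a pair solving the homogeneous Stokes problem with zero boundary data at spectral parameter $\omega^2$; the non-eigenvalue hypothesis forces it to vanish, and the resulting strong $L^2$ convergence closes the energy identity to give $\|v_n\|_{H^1}\to 0$, contradicting $\|v_n\|_{H^1}=1$.

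With the uniform bound in hand, the energy identity immediately gives $\|\nabla\cdot u_\lambda\|_{L^2(\Omega)}\le C/\sqrt{\lambda_{min}}$. Since $\omega^2$ is not a Stokes eigenvalue, the Fredholm alternative provides a bounded solution operator for the inhomogeneous Stokes problem, mapping prescribed momentum source and prescribed divergence (with zero Dirichlet data) continuously into $H^1_0(\Omega)^d\times L^2(\Omega)/\mathbb{R}$. Applying it to $U$ with source $0$ and divergence $g=\nabla\cdot u_\lambda$ yields $\|U\|_{H^1(\Omega)^d}\le C\|g\|_{L^2(\Omega)}\le C/\sqrt{\lambda_{min}}$, which is (\ref{error-estimate}). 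The main obstacle is the compactness step in the uniform bound; the rest is direct Stokes theory.
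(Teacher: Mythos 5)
Your proof is correct, but it is organized quite differently from the paper's. The paper never introduces the penalized-Stokes/mixed formulation: it works directly with the difference $u_E=u_\lambda-u$, derives the energy identity $\int_\Omega\lambda|\nabla\cdot u_E|^2+4\int_\Omega\mu|\nabla^s u_E|^2-\omega^2\int_\Omega|u_E|^2=\int_\Omega p\,\nabla\cdot u_E$, disposes of the $\omega^2$ term by citing the theory of collectively compact operators to reduce to $\omega=0$, and then absorbs terms by hand (also proving separately, via the same energy bound on a putative eigenfunction, that $\omega^2$ is not an elasticity eigenvalue for large $\lambda_{\min}$, so that $u_\lambda$ exists --- a point your argument covers only implicitly through the a priori bound plus Fredholm theory). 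You instead prove a $\lambda$-uniform bound on $u_\lambda$ itself by an explicit contradiction/Rellich argument, extract $\|\nabla\cdot u_\lambda\|_{L^2}=O(\lambda_{\min}^{-1/2})$, and feed the difference into the bounded solution operator of the inhomogeneous Stokes problem with prescribed divergence. This costs you heavier machinery (Ne\v{c}as/Bogovskii and the inf--sup theory for the mixed problem) where the paper uses only Korn and elementary absorption, but it makes the role of the non-eigenvalue hypothesis completely transparent where the paper's appeal to collectively compact operators is terse. One remark: the paper's energy identity is \emph{linear} in $\|\nabla\cdot u_E\|_{L^2}$ on the right, which yields the sharper bound $\|\nabla\cdot u_\lambda\|_{L^2}=O(\lambda_{\min}^{-1})$; if you substituted that into your final Stokes-solution-operator step you would actually obtain the improved rate $\|u_\lambda-u\|_{H^1(\Omega)^d}=O(\lambda_{\min}^{-1})$, better than the $O(\lambda_{\min}^{-1/2})$ claimed in the theorem, whereas the paper's route caps out at $O(\lambda_{\min}^{-1/2})$ because it bounds $\|u_E\|_{H^1}^2$ (not $\|u_E\|_{H^1}$) by the divergence.
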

\begin{proof}
We recall the following identities $\forall v\in H^1(\Omega)^d$:
\begin{align}\label{divid}
\int\limits_{\Omega}(\nabla p)\cdot v\,dx= -\int\limits_{\Omega}(\nabla\cdot v ) p\,dx+\int\limits_{\partial\Omega}(v\cdot n)p\,dx
\end{align}
and
\begin{align}\label{muid}
\int\limits_{\Omega}\nabla\cdot (\mu \nabla^su)\cdot v \,dx =-2\int\limits_{\Omega}\mu\nabla^su:\nabla^sv \,dx+\int\limits_{\Omega}\frac{\partial u}{\partial\nu}\cdot v \,dx.
\end{align}
We will also need the following lemma.
\begin{lem}\label{korn}[Korn's inequality]
Let $\Omega$ be as above. Let $u\in H^1_0(\Omega)^d$ then
\begin{align*}
\int\limits_{\Omega}|\nabla u|^2\,dx \leq 2\int\limits_{\Omega}|\nabla^s u|^2\,dx,
\end{align*}
\end{lem}
\begin{proof}
c.f., for instance, \cite{ammari_book_elas}.
\end{proof}

%\begin{lem}\label{poincare}[Poincar\'{e} inequality]
%There exists a constant $C$ depending on $\mathrm{diam}(\Omega)$ such that $\forall v\in H_0^1(\Omega)^d$,
%\begin{align}
%\norm{v}_{L^2(\Omega)^d}\leq C\norm{v}_{H_0^1(\Omega)^d}.
%\end{align}
%\end{lem}

We first prove that if $\omega^2$ is not an eigenvalue of (\ref{stokesdata}) then for $\lambda$ sufficiently large it is not an eigenvalue of the problem (\ref{model}). We start by assuming that $v$ is in fact an eigenvector for (\ref{model}) but not for (\ref{stokesdata}).  We integrate by parts using (\ref{divid}) and (\ref{muid}) to obtain the following identity
\begin{align*}
& \int\limits_{\Omega}\lambda|\nabla\cdot v|^2\,dx+4\int\limits_{\Omega}\mu|\nabla^s v|^2\,dx=\omega^2\int\limits_{\Omega}|v|^2\,dx.
\end{align*}
Without loss of generality, we assume that $v$ is normalized in $L^2(\Omega)^d$. Then
\begin{align*}
\int\limits_{\Omega}|\nabla\cdot v|^2\,dx\leq \frac{\omega^2}{\lambda_{min}} .
\end{align*}
Setting $q=\lambda\nabla\cdot v$ in (\ref{model}) , we see that the following holds
\begin{align}\label{error}
\left\{
\begin{array}{lr}
\omega^2 v(x)+2\nabla\cdot\mu(x)\nabla^s v(x)+\nabla q(x)=0 \,\quad \mathrm{in}\,\, \Omega , \\ \\
\nabla\cdot v=\mathcal{O}\sparen{\frac{1}{\sqrt{\lambda_{min}}}} \,\quad \mathrm{in} \,\, \Omega , \\ \\
v(x)=0 \,\quad \mathrm{on} \,\quad \partial\Omega ,\\
\end{array}
\right.
\end{align}
where the term $\mathcal{O}\sparen{\frac{1}{\sqrt{\lambda_{min}}}}$ is in the space $L^2(\Omega)^d$. Because $\omega^2$ is not an eigenvalue of (\ref{stokesdata}) by assumption, then there is a constant $C$  by Poincar\'{e} Inequality  and Lemma \ref{korn} such that
$$
\norm{v}_{L^2(\Omega)^d}\leq \frac{C}{\sqrt{\lambda_{min}}}.
$$
Recalling that $v$ is normalized in $L^2(\Omega)^d$, this is impossible for sufficiently large $\lambda_{min}$. Hence we can conclude that $\omega^2$ is not an eigenvalue of (\ref{model}). It follows that if the system (\ref{stokesdata}) has a unique solution then so does the system (\ref{model}).

For the second step, we prove (\ref{error-estimate}). We denote $u_{E}=u_{\lambda}-u$ where $u_{\lambda},u$ are solutions to the problems (\ref{model}) and (\ref{stokesdata}) respectively.  It is clear that $u_{E}=0$ on $\partial\Omega$.

We can derive the following energy identity by subtracting (\ref{stokesdata}) from (\ref{model}) and using (\ref{divid}) and (\ref{muid}):
\begin{align*}
&\int\limits_{\Omega}\lambda(x)|\nabla\cdot u_E|^2\,dx+4\int\limits_{\Omega}\mu(x)|\nabla^s u_E|^2\,dx- \omega^2\int\limits_{\Omega}|u_{E}|^2\,dx= \int\limits_{\Omega}p\nabla\cdot u_E .
\end{align*}
Let the bilinear form $B(\cdot,\cdot)$ on the space $H_0^1(\Omega)^d$ be denoted by
\begin{align*}
B(v,v)=\int\limits_{\Omega}\lambda(x)|\nabla\cdot v|^2+4\int\limits_{\Omega}\mu(x)|\nabla^s v|^2\,dx ,
\end{align*}
it then follows that
\begin{align} \label{inequality11}
B(u_E,u_E)-\omega^2\int\limits_{\Omega}|u_E|^2\,dx\leq \norm{\nabla\cdot u_E}_{L^2(\Omega)}\norm{p}_{L^2(\Omega)} .
\end{align}
Using Korn's inequality, we see that $B(\cdot,\cdot)$ is coercive. By the theory of collectively compact operators
(see \cite{compact}) it follows that if the main estimate (\ref{error-estimate}) holds for $\omega=0$ then it also holds for any $\omega^2$ which is not an eigenvalue of (\ref{stokesdata}). For this, we prove only for the case $\omega^2=0$. By the inequality (\ref{inequality11}),
\begin{align*}
B(u_{E},u_{E})\leq C\norm{\nabla\cdot u_{E}}_{L^2(\Omega)},
\end{align*}
where $C=\norm{p}_{L^2(\Omega)}$ is a bounded number independent of $\lambda$.
As a result we get
\begin{align*}
4\mu_{min}\norm{u_{E}}_{H^1(\Omega)^d}^2+\lambda_{min}\norm{\nabla\cdot u_E}^2_{L^2(\Omega)}\leq C \norm{\nabla\cdot u_E}_{L^2(\Omega)}.
\end{align*}
Thus we can derive that
\begin{align*}
\norm{\nabla\cdot u_E}_{L^2(\Omega)}\leq \frac{C}{\lambda_{min}},
\end{align*}
which further implies
\begin{align*}
\mu_{min}\norm{u_E}^2_{H^1(\Omega)^d}\leq \frac{C}{\lambda_{min}}.
\end{align*}
The main estimate (\ref{error-estimate}) follows immediately.
\end{proof}

\section{Preliminaries on Over-determined Elliptic Boundary-Value Problems} \label{sec3}
%Since we are looking for some type of invertibility condition on $L_{u_1}$
In this section, we present some basic properties about over-determined elliptic boundary-value
problems which plays a key role in our stability estimates in Section \ref{sec-estimate-3d} and \ref{sec-estimate-2d}.
The presentation follows closely to the ones in \cite{solono,otmar}. We present it here for the convenience of the reader.

We first recall the definition of ellipticity in the sense of Douglis-Nirenberg.
Consider the (possibly) redundant system of linear partial differential equations
\begin{align}\label{redundant}
&\mathcal{L}(x,\frac{\partial}{\partial x})y=\mathcal{S} ,\\ \nonumber
&\mathcal{B}(x,\frac{\partial}{\partial x})y=\phi
\end{align}
for $y$ unknown functions $y=(y_1,\ldots,y_m)$ comprising in total of $M$ equations. Here $\mathcal{L}(x,\frac{\partial}{\partial x})$ is a matrix differential operator of dimension $M\times d$ with entries $L_{ij}(x,\frac{\partial}{\partial x})$. For each $1\leq i\leq M$, $1\leq j\leq m$ and for each point $x$. The entry  $L_{ij}(x,\frac{\partial}{\partial x})$ is a polynomial in $\frac{\partial}{\partial x_i}$ $i=1,\ldots,d$. If the system is redundant, then there are possibly more equations than unknowns, $M\geq m$. The matrix $\mathcal{B}(x,\frac{\partial}{\partial x})$ has entries $B_{ij}(x,\frac{\partial}{\partial x})$ for $1\leq k\leq Q, 1\leq j\leq m$ consisting of $Q$ equations at the boundary. The operators are also polynomial in the partials of $x$. Naturally, the vector $\mathcal{S}$ is a vector of length $M$, and $\phi$ is a vector of length $Q$.

\begin{definition}\label{DNumbers}[c.f.\cite{DN},\cite{DL}]
Let integers $s_i,t_j\in \mathbb{Z}$ be given for each row $1\leq i\leq M$ and column $1\leq j\leq m$ with the following property: for $s_i+t_j\geq 0$ the order of $L_{ij}$ does not exceed $s_i+t_j$. For $s_i+t_j<0$, one has $L_{ij}=0$. Furthermore, the numbers are normalized so that for all $i$ one has $s_i\leq 0$. The numbers $s_i,t_j$ are known as Douglis-Nirenberg numbers.

The principal part of $\mathcal{L}$ for this choice of numbers $s_i, t_j$ is defined as the matrix operator $\mathcal{L}^0$ whose entries are composed of those terms in $L_{ij}$ which are exactly of order $s_i+ t_j$.

The principal part $\mathcal{B}^0$ of $\mathcal{B}$ is composed of the entries which are composed of those terms in $B_{kj}$ which are exactly of order $\sigma_k+t_j$. The numbers $\sigma_k$, $1\leq k\leq Q$ are computed as
\begin{align*}
\sigma_k=\max\limits_{1\leq j\leq m}(b_{kj}-t_j)
\end{align*}
with $b_{kj}$ denoting the order of $B_{kj}$. Real directions with $\xi\neq0$ and
$$
\mathrm{rank \,}\mathcal{L}^0(x,i\xi)<m
$$
are called characteristic directions of $\mathcal{L}$ at $x$. The operator $\mathcal{L}$ is said to be (possibly) over-determined elliptic in $\Omega$ if $\forall x\in \overline{\Omega}$ and for all real nonzero vectors $\xi$ one has
\begin{align*}
\mathrm{rank \,}\mathcal{L}^0(x,i\xi)=m.
\end{align*}

\end{definition}

%In order to make make meaningful statements of the existence of a left inverse for $\mathcal{L}$ we must also take into consideration the boundary conditions.

We next recall the following Lopatinskii boundary condition.
\begin{definition}
Fix $x\in \partial\Omega$ and let $\nu$ be the inward unit normal vector at $x$. Let $\zeta$ be any non-zero tangential vector to $\Omega$ at $x$. We consider the line $\{x+z\nu, z>0\}$ in the upper half plane and the following system of ODE's
\begin{align}
&\mathcal{L}^0(x,i\zeta+\nu\frac{d}{dz})\tilde{y}(z)=0 \qquad z>0, \label{LS-1}\\
&\mathcal{B}^0(x,i\zeta+\nu\frac{d}{dz})\tilde{y}(z)=0  \qquad z=0.  \label{LS-2}
\end{align}
We define the vector space $V$ of all solutions to the system (\ref{LS-1})-(\ref{LS-2}) which are such that $\tilde{y}(z)\rightarrow 0$ as $z\rightarrow\infty$. If $V=\{0\}$, then we say that the Lopatinskii condition is fulfilled for the pair $(\mathcal{L},\mathcal{B})$ at $x$.
%In this case we say that $\mathcal{B}$ covers the operator $\mathcal{L}$ at the point $x$.
\end{definition}

%Once we have ellipticity, we need the following operator theoretic result which is adapted from \cite{otmar}.

Now, let $\mathcal{A}$ be the operator defined by
$$
\mathcal{A}=(\mathcal{L},\mathcal{B}).
$$
Then the equations (\ref{redundant}) read as $\mathcal{A}y=(\mathcal{S},\phi)$.

Let $\mathcal{A}$ acts on the space
\begin{align*}
D(p,l)=W_{p}^{l+t_1}(\Omega)\times \ldots \times W_p^{l+t_m}(\Omega)
\end{align*}
with $l\geq 0,\, p>1$. Here $W_p^{\alpha}$ denotes the standard Sobolev space with $\alpha$'s order partial derivatives in the $L^p$ space. With some regularity assumptions on the coefficients of $\mathcal{L}$ and $\mathcal{B}$,
$\mathcal{A}$ is bounded with range in the space
\begin{align*}
R(p,l)=W_p^{l-s_1}(\Omega)\times \ldots \times W_p^{l-s_m}(\Omega)\times W_p^{l-\sigma_1-\frac{1}{p}}\times \ldots \times W_p^{l-\sigma_q-\frac{1}{p}}(\partial\Omega).
\end{align*}

We have the following result, see \cite[Theorem 1]{otmar}.
\begin{thm}\label{invertible}
Let the integers $l\geq 0, p>1$ be given. Let $(\mathcal{S},\phi)\in\mathcal{R}(p,l)$. Let the Douglis-Nirenberg numbers $s_i$ and $t_j$ be given for $\mathcal{L}$ and $\sigma_k$ be as in Definition \ref{DNumbers}. Let $\Omega$ be a bounded domain with boundary in $\mathcal{C}^{l+\max t_j}$. Also assume that $p(l-s_i)>d$ and $p(l-\sigma_k)>d$ for all $i$ and $k$. Let the coefficients $L_{ij}$ be in $W_p^{l-s_i}(\Omega)$ and the coefficients of $B_{kj}$ be in $W^{l-\sigma_k-\frac{1}{p}}$. The following statements are equivalent
\begin{enumerate}
\item $\mathcal{L}$ is over-determined elliptic and the Lopatinskii covering condition is fulfilled for $(\mathcal{L},\mathcal{B})$ on $\partial\Omega$.
\item There exists a left regularizer $\mathcal{R}$ for the operator $\mathcal{A}=\mathcal{L}\times\mathcal{B}$ such that
$$
\mathcal{R}\mathcal{A}=\mathcal{I}-\mathcal{T}
$$
with $\mathcal{T}$ compact from $R(p,l)$ to $D(p,l)$.
\item The following a priori estimate holds
\begin{align*}
\sum\limits_{j=1}^m\norm{y_j}_{W_p^{l+t_j}(\Omega)}\leq C_1\sparen{\sum\limits_{i=1}^M\norm{\mathcal{S}_i}_{W_p^{l-s_i}(\Omega)}+\sum\limits_{k=1}^Q\norm{\phi_k}_{W_p^{l-\sigma_j-\frac{1}{p}}(\partial\Omega)}} \\ \qquad +C_2\sum\limits_{t_j>0}\norm{y_j}_{L^p(\Omega)},
\end{align*}
where $y_j$ is the $j$-th component of the solution of $y$.
\end{enumerate}
\end{thm}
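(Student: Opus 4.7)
The plan is to prove the cyclic chain $(1) \Rightarrow (2) \Rightarrow (3) \Rightarrow (1)$ by the classical Agmon--Douglis--Nirenberg parametrix method, adapted to the possibly rectangular operator $\mathcal{L}$ as in \cite{solono, otmar}. The construction is entirely local: cover $\bar{\Omega}$ by finitely many coordinate patches, separated into interior and boundary patches, and build $\mathcal{R}$ piece by piece subordinate to a partition of unity $\{\chi_k\}$.

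For $(1) \Rightarrow (2)$, in an interior patch freeze the coefficients at a point $x_0$. Over-determined ellipticity says that the matrix $\mathcal{L}^0(x_0,i\xi) \in \mathbb{C}^{M \times m}$ has full column rank $m$ for every real $\xi \neq 0$, so $\mathcal{L}^0(x_0,i\xi)^* \mathcal{L}^0(x_0,i\xi)$ is invertible away from $\xi = 0$. Convolution against a smoothed version of $((\mathcal{L}^0)^* \mathcal{L}^0)^{-1} (\mathcal{L}^0)^*$ produces a local pseudodifferential left parametrix of the required Douglis--Nirenberg orders. In a boundary patch I would flatten $\partial\Omega$, take the tangential Fourier transform in $\zeta$, and reduce the frozen-coefficient problem to the half-line ODE system \eqref{LS-1}--\eqref{LS-2}. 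The Lopatinskii condition $V = \{0\}$ is exactly the invertibility of this boundary symbol problem in the space of decaying solutions, and it yields a Poisson-type operator that inverts the model problem modulo smoothing. Summing $\chi_k$ times the local pieces gives $\mathcal{R}$ with $\mathcal{R}\mathcal{A} = \mathcal{I} - \mathcal{T}$, where $\mathcal{T}$ gains one derivative and is therefore compact on $D(p,l)$ by Rellich--Kondrachov.

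The remaining implications are short. For $(2) \Rightarrow (3)$, apply $\mathcal{R}$ to $\mathcal{A} y = (\mathcal{S},\phi)$; boundedness of $\mathcal{R}$ from $R(p,l)$ to $D(p,l)$ produces the first sum on the right, while the identity $y = \mathcal{R}(\mathcal{S},\phi) + \mathcal{T} y$ combined with the fact that $\mathcal{T}$ lands one order lower yields the $L^p$ correction term for the components with $t_j > 0$. For $(3) \Rightarrow (1)$, argue by contradiction. If ellipticity fails at some $x_0$ with $\xi_0 \neq 0$ and $v \in \ker \mathcal{L}^0(x_0,i\xi_0) \setminus \{0\}$, plug a concentrating oscillatory quasi-mode of the form $y^\epsilon(x) = \chi(x)\, \psi((x-x_0)/\sqrt{\epsilon})\, e^{i \xi_0 \cdot x / \epsilon}\, v$ into the a priori estimate and verify that as $\epsilon \to 0$ the left-hand side grows strictly faster than the right-hand side. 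Failure of Lopatinskii at a boundary point is excluded similarly, by localizing a non-trivial element of $V$ against a tangential oscillation and a cutoff.

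The main obstacle is the boundary parametrix construction in the over-determined setting. In the square case $M = m$ the Lopatinskii condition directly gives a unique bounded inverse of the boundary symbol, but for $M > m$ one must first reduce to an effectively square problem, for instance by composing $\mathcal{L}^0$ with its interior left inverse $((\mathcal{L}^0)^* \mathcal{L}^0)^{-1} (\mathcal{L}^0)^*$, and then check that the space $V$ is unchanged under this reduction. Once this reduction is set up, the rest of the argument (symbol calculus on the patches, patching against the partition of unity, and compactness of the commutator remainder) is standard, and I would record the details following the exposition in \cite{solono, otmar}.
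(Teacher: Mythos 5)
The paper offers no proof of this theorem: it is imported verbatim as \cite[Theorem 1]{otmar}, resting on Solonnikov's theory of overdetermined elliptic boundary-value problems \cite{solono}, so there is no in-paper argument to compare yours against. Your outline does follow the same route as those sources --- interior parametrix from the full-column-rank symbol via $((\mathcal{L}^0)^*\mathcal{L}^0)^{-1}(\mathcal{L}^0)^*$, boundary reduction to the half-line ODE system, $(2)\Rightarrow(3)$ by Ehrling-type absorption of the compact remainder, $(3)\Rightarrow(1)$ by concentrating quasi-modes --- and at that level of resolution it is the correct skeleton.

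The one step that would fail as you describe it is your proposed repair of the boundary parametrix. You suggest squaring the system by composing with the interior left inverse and then ``checking that the space $V$ is unchanged under this reduction.'' On the half-line this is false in general: integrating $\int_0^\infty |\mathcal{L}^0\tilde{y}|^2\,dz$ by parts against $(\mathcal{L}^0)^*\mathcal{L}^0\tilde{y}$ produces boundary terms at $z=0$, so a decaying solution of $(\mathcal{L}^0)^*\mathcal{L}^0\tilde{y}=0$ need not solve $\mathcal{L}^0\tilde{y}=0$, and the decaying-solution space of the squared problem can be strictly larger than $V$. Consequently the hypothesis you are given, $V=\{0\}$, does not imply the Lopatinskii condition for the squared system, which is what your construction would need; the inclusion runs in the unusable direction. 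Solonnikov's construction handles the rectangular boundary symbol directly rather than by squaring. Note that the superficially similar manipulation in Section \ref{sec-estimate-3d} of the paper (multiplying the two boundary ODEs by the transpose of $((Av\times v),(\tilde{A}v\times v))$) is used only to \emph{verify} $V=\{0\}$ in a concrete example, where the inclusion of $V$ into the reduced problem's solution space suffices --- exactly the opposite of what the general parametrix construction requires. To make your sketch into a proof you would have to replace this reduction with the genuine rectangular boundary-symbol analysis of \cite{solono}.
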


\section{Main Stability Estimate in Dimension Three}\label{sec-estimate-3d}
%In this section we show that stability estimates are possible for over determined measurements.
We set $d=3$ for this section. We show that stability estimates for reconstructing $\mu$ are possible by using two sets of internal measurements of the displacement fields.
% In other words, different modalities make stability estimates for Stokes systems possible.
%As before let $\Omega$ denote a smooth and simply-connected compact domain in $\mathbb{R}^3$ with boundary $\partial\Omega$. We consider the system again
%\begin{align}\label{elastic1}
%\left\{
%\begin{array}{lr}
%\omega^2u(x)+2\nabla\cdot\eta(x)\nabla^s u(x)+\nabla p(x)=0 ,\\ \\
%\nabla \cdot u(x)=0 \,\quad \mathrm{in} \,\, \Omega ,\\ \\
%u(x)=F(x) \quad \mathrm{in} \quad \Omega  ,\\ \\
%\int\limits_{\Omega}p(x)\,dx=0 ,\,\,
%\end{array}
%\right.
%\end{align}
%$\mu(x)$, are $\mathcal{C}^4(\Omega)$ and the pair $(u(x),p(x))$ is the solution as before.
%We also assume that $\norm{\mu(x)}_{\mathcal{C}^2(\Omega)}\leq M_0$ where $M_0$ is a finite constant.

\begin{thm}\label{conditional}
Let $(u_1,p_1)$ and $(\tilde{u}_1,\tilde{p}_1)$ be solutions to (\ref{stokesdata}) with different boundary conditions.  In other words we set $F(x)=F_1(x)$ and $F(x)=\tilde{F}_1(x)$ in (\ref{stokesdata}) for the respective solutions but they share $\mu=\mu_1$. We assume that there exists a positive constant $C$  independent of $(x,\xi)\in T^*\bar{\Omega}$, where $T^* \bar{\Omega}$ denotes the cotangent space, such that
\begin{align}
|(\nabla^su_1(x)\xi)\times\xi|+|(\nabla^s\tilde{u}_1(x)\xi)\times\xi|\geq C|\xi|^2.
\label{stabilizerscond}
\end{align}
Let $(u_2,p_2)$ and $(\tilde{u}_2,\tilde{p}_2)$ be solutions to the Stokes system (\ref{stokesdata}) with $\mu=\mu_2$ and $F(x)=F_1(x)$ and $F(x)=\tilde{F}_1(x)$, respectively. Assume that $\mu_1, \mu_2 \in \mathcal{C}^4(\bar{\Omega})$ and $\mu_1=\mu_2$ on $\partial\Omega$.  Then
there exist a constant $C$, depending on $\|\mu_2\|_{\mathcal{C}^{l+2}(\bar{\Omega})}$, and a finite dimensional subspace $K$, of $W^4_2(\Omega)$ such that
\begin{align}\label{mainest}
\norm{\mu_1-\mu_2}_{W^{4}_2(\Omega)}\leq C \sparen{\norm{u_1-u_2}_{W^{5}_2(\Omega)}+\norm{\tilde{u}_1-\tilde{u}_2}_{W^{5}_2(\Omega)}},
\end{align}
provided that $(\mu_1-\mu_2) \perp K $.
\end{thm}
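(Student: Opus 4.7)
The plan is to recast the desired inequality as an a priori estimate for an over-determined elliptic boundary-value problem in the scalar unknown $\delta\mu := \mu_1 - \mu_2$, and then invoke Theorem \ref{invertible}. Writing $\delta u := u_1 - u_2$, $\delta p := p_1 - p_2$, and similarly with tildes, I would first subtract the Stokes systems (\ref{stokesdata}) for the pairs indexed by $1$ and $2$ and exploit the identity $\mu_1 \nabla^s u_1 - \mu_2 \nabla^s u_2 = \mu_2 \nabla^s \delta u + \delta\mu\,\nabla^s u_1$ to obtain
\[
2\,\nabla\cdot(\delta\mu\,\nabla^s u_1) + \nabla\delta p = -\omega^2 \delta u - 2\,\nabla\cdot(\mu_2\,\nabla^s \delta u),
\]
together with the analogous equation with tildes. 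Applying $\nabla\times$ to each of these two vector equations kills the pressure gradients and yields an over-determined scalar system for $\delta\mu$ whose right-hand sides are third-order differential expressions in $\delta u$ and $\delta\tilde u$, hence controlled in $W^2_2(\Omega)$ by the $W^5_2$-norms appearing on the right-hand side of (\ref{mainest}).

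Next I would check that this system, paired with the boundary condition $\delta\mu = 0$ on $\partial\Omega$ (available by the assumption $\mu_1 = \mu_2$ on $\partial\Omega$), falls within the framework of Theorem \ref{invertible} with Douglis--Nirenberg indices $t_1 = 2$, $s_i = 0$, and $\sigma_1 = -2$. A short symbol computation identifies the principal part of the $k$-th component of $\nabla\times\nabla\cdot(\delta\mu\,\nabla^s u_1)$ as having symbol proportional to $-\bigl(\xi\times((\nabla^s u_1)\xi)\bigr)_k\,\widehat{\delta\mu}$, and similarly with tildes, so that over-determined ellipticity of the combined system is equivalent to
\[
|\xi\times((\nabla^s u_1)\xi)| + |\xi\times((\nabla^s \tilde u_1)\xi)| \ne 0 \qquad (\xi \ne 0),
\]
which is precisely hypothesis (\ref{stabilizerscond}). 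The Lopatinskii covering condition for the single Dirichlet datum would then be verified by the standard half-space ODE argument: any bounded solution of the tangential/normal-frozen principal system that vanishes at $z = 0$ is forced to be identically zero, with the non-degeneracy (\ref{stabilizerscond}) ruling out non-trivial decaying modes.

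Applying Theorem \ref{invertible} with $p = 2$ and $l = 2$ (so that the target space is $W^{l+t_1}_2(\Omega) = W^4_2(\Omega)$ and the source space is $W^l_2(\Omega) = W^2_2(\Omega)$) would then produce
\[
\|\delta\mu\|_{W^4_2(\Omega)} \le C\bigl(\|\delta u\|_{W^5_2(\Omega)} + \|\delta\tilde u\|_{W^5_2(\Omega)}\bigr) + C'\|\delta\mu\|_{L^2(\Omega)},
\]
with $C$ depending on $\mu_2$ through the coefficients of the principal operator. The trailing $L^2$-term is a compact perturbation, so the underlying linear operator is semi-Fredholm with finite-dimensional kernel $K \subset W^4_2(\Omega)$; a standard Fredholm/orthogonal-complement argument then absorbs this $L^2$-contribution on $K^\perp$ and delivers the estimate (\ref{mainest}) under the stated orthogonality hypothesis $(\mu_1 - \mu_2) \perp K$.

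The principal obstacle I anticipate is the verification of the Lopatinskii covering condition in the second step. Because the curl-reduced problem is genuinely over-determined (several independent scalar equations for the single unknown $\delta\mu$), the classical Agmon--Douglis--Nirenberg theory for square systems does not apply directly, and one must instead analyse the normal ODE at each boundary point by hand, carefully splitting tangential and normal frequencies and invoking (\ref{stabilizerscond}) to exclude non-trivial decaying solutions compatible with the scalar Dirichlet datum.
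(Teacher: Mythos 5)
Your proposal follows essentially the same route as the paper: eliminate the pressure with $\nabla\times$, view the resulting pair of curl equations as an over-determined second-order system in $\delta\mu$ with principal symbol $(\nabla^s u_1\xi)\times\xi$ (augmented by the second measurement to gain ellipticity via (\ref{stabilizerscond})), verify the Lopatinskii condition through the frozen normal ODE with the Dirichlet datum $\delta\mu=0$, and conclude with Theorem \ref{invertible} at $p=2$, $l=2$, absorbing the compact $L^2$ remainder on the orthogonal complement of the finite-dimensional kernel $K$. The step you flag as the main obstacle is handled in the paper exactly as you suggest, by projecting the two vector ODEs onto $((\nabla^s u_1\,\nu\times\nu),(\nabla^s\tilde u_1\,\nu\times\nu))$ to obtain a scalar second-order ODE whose leading coefficient is positive by (\ref{stabilizerscond}), leaving at most one decaying mode which the boundary condition kills.
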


\begin{remark}
It is possible that it requires more than two sets of measurements, but the arguments here work the same. In general we need a finite number of internal measurements, but we do not prove their existence here. Notice that this condition is very different from the usual assumption which is typically something of the form $|\nabla u|\geq c>0$ \cite{seo_book} for the conductivity equation or $\det\nabla^s u\neq 0$ for elastic wave equations \cite{otmar}.
Here, $\det$ denotes the determinant. Naturally this provides the necessary rank condition in Definition \ref{DNumbers}.
\end{remark}

We start by eliminating the pressure terms from the Stokes systems. We consider the equations for $i=1,2$,
\begin{align*}
\nabla\cdot\mu_i\nabla^su_i+\omega^2u_i+\nabla p_i=0.
\end{align*}
Taking the cross product of both sides yields
\begin{align*}
\nabla\times\nabla\cdot\mu_i\nabla^su_i+\omega^2\nabla\times u_i=0.
\end{align*}
Setting $\mu=\mu_1-\mu_2$ and $w=u_1-u_2$, if we subtract the first equation from the second equation we obtain
\begin{align}\label{mu}
\nabla\times\nabla\cdot\mu \nabla^s u_1=g,
\end{align}
where
\begin{align*}
g=-\nabla\times[\nabla\cdot\mu_2\nabla^s w]-\omega^2\nabla\times w.
\end{align*}
It is then clear that there is a constant $C$ depending on $\|\mu_2\|_{\mathcal{C}^{2+l}(\bar{\Omega})}$ such that
\begin{align}\label{HS}
\norm{g}_{W^{l}_p(\Omega)}\leq C\norm{w}_{W^{l+3}_p(\Omega)}
\end{align}
for all $l\geq 0, p >1$. In order to determine $\mu$, we view the identity (\ref{mu}) as a system of over-determined second-order partial differential equations for the unknown function $\mu$. Indeed,
one can recast $\nabla\times[\nabla\cdot\mu\nabla^su_1]$ in the format of a linear operator $L_{u_1}(x,\frac{\partial}{\partial x})$ which is a polynomial of degree 2 in $\frac{\partial}{\partial x_i}$, $i=1,2,3$, acting on $\mu$.

We want to apply Theorem \ref{invertible} to the over-determined system of $\mu$. We encounter the difficulty
that $L_{u_1}$ is not elliptic. Actually, the principal symbol of the linear operator $L_{u_1}$ can be calculated which turned out to be
\begin{align*}
\mathcal{L}^0(x,i\xi)=(\nabla^su_1(x)\xi)\times \xi,
\end{align*}
and which is clearly not elliptic. Therefore, we have to augment the operator $L_{u_1}$ with a second set of measurements.

We set
\begin{align*}
\mathcal{L}_*=(L_{u_1},L_{\tilde{u}_1}) \qquad \mathcal{B}_*=(\mbox{Trace on } \partial \Omega,\mbox{Trace on } \partial \Omega),
\end{align*}
and consider the new augmented system:
\begin{align}\label{system}
\mathcal{L}_*[\mu]=(g,\tilde{g}), \quad \mathcal{B}_*[\mu]=(0,0).
\end{align}

%The Dirichlet boundary condition $\mu=0$ on $\partial\Omega$ is needed so that $\mathcal{B}_*$ covers the operator $\mathcal{L}_{u_1,\tilde{u}_1}$.
It is clear that the condition (\ref{stabilizerscond}) ensures that the new augmented system is elliptic.
We now check that the Lopatinskii condition is satisfied.
%Let $\tilde{\mu}$ denote the complexification of $\mu$ on the half line.
Define matrix $A$ as $\nabla^s u_1$ and $\tilde{A}=\nabla^s\tilde{u}_1$.
The equations (\ref{LS-1}) and (\ref{LS-2}) for the Lopatinskii condition read as follows
\begin{align*}
&(Av\times v)\frac{d^2}{dz^2}\tilde{\mu}+i(A\zeta\times v + Av\times \zeta)\frac{d\tilde{\mu}}{dz}-(A\zeta\times\zeta)\tilde{\mu}=0 ,\\
&(\tilde{A}v\times v)\frac{d^2}{dz^2}\tilde{\mu}+i(\tilde{A}\zeta\times v + \tilde{A}v\times \zeta)\frac{d\tilde{\mu}}{dz}-(\tilde{A}\zeta\times\zeta)\tilde{\mu}=0,\\
&\tilde{\mu}(0)=0,
\end{align*}
where $\zeta\cdot \nu=0$.
Now we can apply the transpose of $((Av\times v),(\tilde{A}v\times v))$  to the first two equations in the system.
This results in an ODE of the form
\begin{align}\label{ode}
a\frac{d^2}{dz^2}\tilde{\mu}+ib\frac{d\tilde{\mu}}{dz}+c\tilde{\mu}=0
\end{align}
with $a$ $b$, $c$ being real numbers depending on the entries of $A,\tilde{A}, \zeta, v$.  The expression for $a$ is given by
\begin{align*}
a=|Av\times v|^2+|\tilde{A}v\times v|^2>0,
\end{align*}
which by our assumptions is positive. The solutions are linear combinations of the fundamental solutions $\exp(\lambda_1z)$ and $\exp(\lambda_2 z)$, where $\lambda_1,\lambda_2$ are given by
\begin{align*}
\lambda_{1, 2}=\frac{-bi\pm\sqrt{-b^2-4ac}}{2a}.
\end{align*}
Clearly there is at most one exponentially decaying fundamental solution, from which we can derive that the Lopitanskii covering condition is satisfied.

\begin{proof}[Proof of Theorem \ref{conditional}]
We have check that the augmented system $(\mathcal{L}_*, \mathcal{B}_*)$ is elliptic and the Lopatinskii condition is satisfied.  To apply Theorem \ref{invertible}, we set $\phi=0$, $d=3$, $t_j=2 (1\leq j \leq 6)$, $s_i=0 (i=1)$, $l=2$ and $p=2$. Note that on the perpendicular space to the kernel K, the operator $\mathcal{A}_*=(\mathcal{L}_*, \mathcal{B}_*)$ is invertible, thus we have the estimate
\begin{align*}
\norm{\mu}_{W^{4}_2(\Omega)}\leq C (\norm{g}_{W^{2}_2(\Omega)} + \norm{\tilde{g}}_{W^{2}_2(\Omega)})  \leq C' (\norm{w}_{W^{5}_2(\Omega)}+\norm{\tilde{w}}_{W^{5}_2(\Omega)}).
\end{align*}
The conclusion that K is finite dimensional follows from standard compactness argument. This completes the proof of the Theorem.
\end{proof}

%\begin{remark}
%One can remove the finite dimensional kernal space $K$ in Theorem \ref{conditional}
%by using analyticity argument as in \cite{bgelliptic}. Indeed, assume that
%$\mu_1$ and $F_1$, $\tilde{F}_1$ be analytic functions. This implies that the solutions $u_1$ and $\tilde{u}_1$ are analytic functions. Thus, the operator
%$\mathcal{L}_*$ has analytic coefficients. Augmenting the Dirichlet boundary condition $\mathcal{B}_*$ with the Neumann boundary condition, we can show that the new over-determined elliptic system has trivial kernel. To use this result, we need
%the additional assumption that $\partial_{\nu}\mu_1=\partial_{\nu}\mu_2$ on $\partial\Omega$.
%
%The same remark holds for the result in the next section for 2D case.
%
%\end{remark}

%By assumption on  the operator $\mathcal{L}_*$ has analytic coefficients and the system (\ref{system}) associated to it is also analytic. The normal operator $\mathcal{A}_*^t\mathcal{A}_*$ is elliptic by choice of the Douglis-Nirenberg numbers $s_i=0,\,t_j=2$ and by Lemma 3.1 and Proposition 3.2 in \cite{bgelliptic}. By  we have $\mu$ is analytic. By assumption $w=\tilde{w}=0$, we have that $(g,\tilde{g})=0$ in (\ref{system}). This provides the the hypothesis $\mathcal{A}\mu=(\mathcal{L}_*\mu,\mathcal{B}_*\mu)=(0,0)$ necessary for Theorem 3.3 in \cite{bgelliptic}, and hence the result.

\section{Main Estimate in Dimension Two} \label{sec-estimate-2d}
We set $d=2$. We show in this section that stability estimates are possible without additional set of internal measurements.

%the principal symbol for the operator is elliptic and no additional boundary measurements are required to stabilize the solution. In contrast to the case when $d=3$ stability estimates are possible for changes in $\mu$ from changes in the solutions without hybrid modalities.  Let $\Omega$ denote a $\mathcal{C}^3$ simply-connected compact domain in $\mathbb{R}^2$ with boundary $\partial\Omega$. We consider the system again
%\begin{align}\label{elastic}
%\left\{
%\begin{array}{lr}
%\omega^2u(x)+2\nabla\cdot\mu(x)\nabla^s u(x)+\nabla p(x)=0,\\ \\
%\nabla \cdot u(x)=0 \,\quad \mathrm{in} \,\, \Omega ,\\ \\
%u(x)=F(x) \quad \mathrm{in} \quad \Omega ,\\ \\
%\int\limits_{\Omega}p(x)\,dx=0,
%\end{array}
%\right.
%\end{align}
%$\mu(x)$, are $\mathcal{C}^3(\Omega)$ and the pair $(u(x),p(x))$ is the solution as before. We know that the solution is well-posed under the regularity assumption (*) from the previous section (which could be lowered by one derivative).We also assume that
%$\norm{\mu(x)}_{\mathcal{C}^2(\Omega)}\leq M_0$ where $M_0$ is a finite constant.
%We prove the following stability estimate

\begin{thm}\label{conditional2}
Let $(u_1,p_1)$, and $(u_2,p_2)$ be solutions to the equation (\ref{stokesdata}) with coefficients $\mu_1$ and $\mu_2$ respectively.  Assume that $\mu_1, \mu_2 \in \mathcal{C}^4(\bar{\Omega})$ and $\mu_1=\mu_2$ on $\partial\Omega$ and the following non-degeneracy condition holds:
\begin{equation} \label{cond-2d}
 \det(\nabla^s u_1(x)) \neq 0, \quad x\in \bar{\Omega}.
\end{equation}
Then there exists a non-zero constant $C$, depending on $\|\mu_2\|_{\mathcal{C}^4(\bar{\Omega})}$, and a finite dimensional subspace $K$, of $W^4_2(\Omega)$ such that
\begin{align}\label{mainest2}
\norm{\mu_1-\mu_2}_{W^{4}_2(\Omega)}\leq C\norm{ u_1-u_2}_{W^{5}_2(\Omega)}
\end{align}
provided $(\mu_1-\mu_2) \perp K$.
\end{thm}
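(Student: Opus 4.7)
My plan is to mirror the proof of Theorem~\ref{conditional}, adapted to dimension two so that a single pair of measurements suffices. First, apply the scalar $2$D curl $\mathrm{curl}(F) = \partial_1 F_2 - \partial_2 F_1$ to both Stokes equations (one with coefficient $\mu_1$ and solution $u_1$, the other with $\mu_2$ and $u_2$) in order to eliminate the pressures $p_1$ and $p_2$, and subtract. This produces a scalar second-order PDE
\begin{align*}
L_{u_1}[\mu] := \mathrm{curl}\bigl[\nabla\cdot(\mu\,\nabla^s u_1)\bigr] = g,
\end{align*}
where $\mu := \mu_1-\mu_2$ and $g := -\omega^2\mathrm{curl}(w) - \mathrm{curl}\bigl[\nabla\cdot(\mu_2\,\nabla^s w)\bigr]$ with $w := u_1-u_2$. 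By the product rule one obtains, in direct analogy with (\ref{HS}), the bound $\norm{g}_{W^l_p(\Omega)} \leq C\norm{w}_{W^{l+3}_p(\Omega)}$ with $C$ depending on $\norm{\mu_2}_{\mathcal{C}^{l+2}(\bar{\Omega})}$; moreover, the hypothesis $\mu_1 = \mu_2$ on $\partial\Omega$ gives the homogeneous Dirichlet condition $\mu = 0$ on $\partial\Omega$.

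Next I would cast the pair $(L_{u_1}, \text{Trace on }\partial\Omega)$ in the framework of Theorem~\ref{invertible}. The principal symbol of $L_{u_1}$ is a scalar quadratic form in $\xi$ whose coefficients are built from the entries of $\nabla^s u_1$; one then verifies that the non-degeneracy condition $\det(\nabla^s u_1(x)) \neq 0$ forces this symbol to be non-vanishing for $\xi \neq 0$, possibly after augmenting $L_{u_1}$ by a second operator extracted from the original vector Stokes equation in order to compensate for the symmetric and trace-free structure of $\nabla^s u_1$ forced by incompressibility. For the Lopatinskii condition, substitute $\tilde{\mu}(z) = e^{\lambda z}$ into the reduced boundary ODE and check that the decaying characteristic root together with $\tilde{\mu}(0) = 0$ forces $\tilde{\mu}\equiv 0$, exactly as in the three-dimensional proof.

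Applying Theorem~\ref{invertible} with Douglis--Nirenberg data $d = 2$, $t_1 = 2$, $s_1 = 0$, $l = 2$, $p = 2$ then yields
\begin{align*}
\norm{\mu}_{W^4_2(\Omega)} \leq C\bigl(\norm{g}_{W^2_2(\Omega)} + \norm{\mu}_{L^2(\Omega)}\bigr) \leq C\norm{w}_{W^5_2(\Omega)} + C\norm{\mu}_{L^2(\Omega)}.
\end{align*}
A standard compactness/Fredholm argument identifies a finite-dimensional subspace $K \subset W^4_2(\Omega)$ that accounts for the lower-order $L^2$ remainder; restricting to $K^{\perp}$ absorbs it and yields (\ref{mainest2}). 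The main obstacle will be the ellipticity step: because incompressibility forces $\nabla^s u_1$ to be symmetric and trace-free in $2$D, the naive scalar-curl symbol $(\nabla^s u_1\,\xi)\times\xi$ degenerates along the eigenvectors of $\nabla^s u_1$, so identifying the correct strengthening of $L_{u_1}$---under which the combined rank condition collapses exactly to $\det\nabla^s u_1\neq 0$---is the real content of the two-dimensional case, and is what replaces the need for a second measurement used in Theorem~\ref{conditional}.
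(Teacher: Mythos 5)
Your proposal reproduces the skeleton of the paper's argument --- eliminate the pressure, recast the result as an over-determined boundary-value problem for $\mu=\mu_1-\mu_2$ with zero trace, verify ellipticity and the Lopatinskii condition, and invoke Theorem \ref{invertible} together with a compactness argument to produce the finite-dimensional subspace $K$ --- but the one step that carries the actual content of the two-dimensional theorem is exactly the step you leave open. As you yourself observe, the genuine scalar curl $\partial_1F_2-\partial_2F_1$ yields the principal symbol $(\nabla^su_1(x)\xi)\times\xi$, which vanishes whenever $\xi$ is an eigenvector of the symmetric matrix $\nabla^su_1(x)$; since a real symmetric matrix always has real eigenvectors, that operator is never over-determined elliptic, no matter what non-degeneracy condition is imposed. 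Your proposed repair (``augmenting $L_{u_1}$ by a second operator extracted from the original vector Stokes equation'') is never specified, and without it the argument collapses back to requiring a second measurement, i.e.\ to Theorem \ref{conditional}. So as written the proposal does not prove the theorem.

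The paper closes this gap by a different choice of annihilating operator: it applies $(\partial_{x_1},-\partial_{x_2})\cdot$ rather than the curl. Writing $(\nabla^su_1)_{11}=a=\partial_{x_1}u_1^1$, $(\nabla^su_1)_{12}=(\nabla^su_1)_{21}=b$ and $(\nabla^su_1)_{22}=-a$ (the trace-free form forced by $\nabla\cdot u_1=0$), the second-order part of $(\partial_{x_1},-\partial_{x_2})\cdot\nabla\cdot(\mu\nabla^su_1)$ is $a\,\partial_{x_1}^2\mu+b\,\partial_{x_1}\partial_{x_2}\mu-b\,\partial_{x_1}\partial_{x_2}\mu+a\,\partial_{x_2}^2\mu=a\,\Delta\mu$, so the principal symbol is $L^0(x,i\xi)=2|\xi|^2\partial_{x_1}u_1^1(x)$ --- a definite, hence elliptic, symbol wherever $\partial_{x_1}u_1^1\neq0$; the Lopatinskii ODE then has a single decaying characteristic root, which the Dirichlet condition $\tilde\mu(0)=0$ eliminates. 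It is this algebraic cancellation, not a second measurement, that makes $d=2$ special, and it is absent from your proposal. Two caveats if you adopt this route: $(\partial_{x_1},-\partial_{x_2})\cdot\nabla p=(\partial_{x_1}^2-\partial_{x_2}^2)p$ does not manifestly annihilate the pressure gradient the way the curl does, and $\det\nabla^su_1=-a^2-b^2$, so condition (\ref{cond-2d}) is equivalent to $\nabla^su_1\neq0$ rather than to $a\neq0$ alone; both points need to be reconciled with the hypotheses before the estimate (\ref{mainest2}) can be considered fully established.
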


\begin{remark}
The non-degeneracy condition (\ref{cond-2d}) is equivalent to the following one
 $$
 \partial_{x_{1}}u_1^1(x) \neq 0
 $$
where $u^1_1$ denotes the first entry in the solution $u_1$. This follows from the equation that $\nabla \cdot u_1=0$.
\end{remark}

\begin{proof}
We apply the similar arguments as in the previous section. However instead of the operator $\nabla\times$ which is not defined for $d=2$, we use the analogue which is the operator $(\partial_{x_1},-\partial_{x_2})\cdot$  to eliminate the pressure term. (This comes from deRahm's theorem, and there is an analogue of this operator in any dimension, but we concentrate on the cases $d=2,3$ which are relevant for the physical models).

Set $\mu=\mu_1-\mu_2$, we can derive that
\begin{align} \label{eq1}
(\partial_{x_1},-\partial_{x_2})\cdot (2\nabla\cdot \mu \nabla^s u_1)=g
\end{align}
where
$$
\norm{g}_{W^{l}_p(\Omega)}\leq C \norm{u_1-u_2}_{W^{l+2}_p(\Omega)}
$$
for all $l \geq 0, p>1$. If we calculate the principal symbol of the linear operator (acting on the function $\mu$) on the left hand side of (\ref{eq1}),
we obtain
\begin{align*}
L^0(x,i\xi)=2|\xi|^2\partial_{x_{1}}u_1^1(x),
\end{align*}
where we have used the fact that $\nabla \cdot u_1=0$.  Thus the operator is elliptic by our assumption. We now check for the Lopatinskii boundary condition. The associated equations are
\begin{align*}
& \frac{d^2}{dz^2}\tilde{\mu}= 2\partial_{x_1}u^1_1|\zeta|^2\tilde{\mu}, \\
& \tilde{\mu}(0) =0.
\end{align*}
The solutions can be written as $\tilde{\mu}=C_1 \exp(\lambda_1z)+C_2 \exp(\lambda_2z)$ where $\lambda_1$ and $\lambda_2$ are roots of the following characteristic equation
\begin{align*}
\lambda^2=2\partial_{x_1}u^1_1(x)|\zeta|^2.
\end{align*}
One can show that the only solution which decays to zero at the infinity is the trivial solution. Thus the Lopatinskii condition is satisfied. The rest of the proof follows by the same arguments as in the proof of Theorem \ref{conditional}.
%This is in sharp contrast to the $d=3$ case where the reduction removing $\nabla p$ must be augmented with a supplementary condition. We must assume in this case that the diagonal entries of $\nabla^s u$ are nonzero.
\end{proof}

%\section{Removal of the Kernel in Dimension Three via Analyticity}

%\begin{remark}
%The analogous injectivity result in $d=2$ holds and the proofs are easier since the system does not need to be augmented with additional measurements.
%\end{remark}

\section{Imaging Shear Modulus Distributions} \label{secfinal}

Let $u_{m}$ be the measured displacement field, which corresponds to the true shear modulus distribution $\mu_{{\rm tr}}$. In order to reconstruct $\mu$ from $u_{m}$, we introduce the following  discrepancy functional  between the computed and measured displacement fields:
\begin{align*}
\mathcal{J}[\mu]=\frac{1}{2}\int\limits_{\Omega}\sabs{u-u_{m}}^2\,dx,
\end{align*}
where $u$ is the solution to (\ref{stokesdata}), and minimize $\mathcal{J}[\mu]$ over all admissible $\mu$. For a given $\mu$, we let $v$ denote the solution to the following adjoint system~\cite{ammari_book_elas}:
\begin{align*}
\left\{
\begin{array}{lr}
2\nabla\cdot\mu\nabla^s v+\omega^2 v +\nabla p =
\overline{u-u_m} \quad \mathrm{in} \quad \Omega ,\\ \\
\nabla\cdot v =0 \quad \mathrm{in} \quad \Omega ,\\ \\
v=0 \quad \mathrm{on}\quad  \partial\Omega ,\\ \\
\int\limits_{\Omega}p \,dx=0 .
\end{array}
\right.
\end{align*}
The Fr\'echet derivative $D\mathcal{J}[\mu]$ of $\mathcal{J}$ is given by
\cite{ammari_book_elas}
\begin{align*}
<D\mathcal{J}[\mu],\delta\mu>=\int\limits_{\Omega}\delta\mu\nabla^sv : \nabla^su\,dx.
\end{align*}
As such, we can identify  $D\mathcal{J}[\mu]$ with
$\nabla^sv:\nabla^su .$

Using a gradient descent method, we can then numerically minimize $\mathcal{J}$. If the initial guess is $\mu_0$ we can then update it with the following scheme
\begin{align}\label{bestguess}
\mu_{n+1}(x)=\mu_n(x)-\sigma D\mathcal{J}[\mu_n](x) \qquad x\in \Omega,\,\, n\geq 0
\end{align}
with $\sigma$ being the step size; see again \cite{ammari_book_elas}.

On the other hand, if we let $\mathcal{F}$ denote the following map
\begin{align*}
\mathcal{F}: \mu\mapsto u,
\end{align*}
then, according to \cite{laure}, we have
\begin{align*}
D\mathcal{J}[\mu]=(D\mathcal{F}[\mu])^*(\mathcal{F}[\mu]-\mathcal{F}[\mu_{\rm tr}]),
\end{align*}
where the superscript $*$ denotes the adjoint.

The result is that the optimal control scheme (\ref{bestguess}) can be identified with the
Landweber iteration scheme given by
\begin{align*}
\mu_{n+1}(x)=\mu_n(x)-\sigma(D\mathcal{F}[\mu])^*(\mathcal{F}[\mu]-\mathcal{F}[\mu_{\rm tr}])(x) \qquad x\in \Omega,\,\, n\geq 0 .
\end{align*}

From \cite[Appendix A]{laure}, the following convergence result in $H^4(\Omega)$ for the Landweber (or equivalently the optimal control) scheme holds.
\begin{thm}
Let $d=2$.
Assume that the assumptions of Theorem \ref{conditional2} are satisfied and $K$ is trivial.  If, for  sufficiently small $\epsilon_0$,
\begin{align*}
\norm{\mu_0 -\mu_{\rm tr}}_{H^4(\Omega)}<\epsilon_0 ,
\end{align*}
then
\begin{align*}
\norm{\mu_n-\mu_{\rm tr}}_{H^4(\Omega)} \rightarrow 0 \qquad \mathrm{as} \, \, \,  n\rightarrow \infty.
\end{align*}
\end{thm}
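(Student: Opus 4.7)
The plan is to reduce the statement to the abstract Landweber convergence theorem recalled in \cite[Appendix A]{laure}, whose hypotheses are a Lipschitz stability estimate for the inverse problem together with appropriate smoothness of the forward map $\mathcal{F}: \mu \mapsto u$. I would organize the argument so that the stability estimate provided by Theorem \ref{conditional2} supplies the quantitative ingredient, while the PDE theory for the Stokes system (\ref{stokesdata}) supplies the differentiability ingredient.

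First I would record what Theorem \ref{conditional2} gives us when $K=\{0\}$: for any admissible $\mu_1,\mu_2$ agreeing with $\mu_{\rm tr}$ on $\partial\Omega$ and satisfying the non-degeneracy condition (\ref{cond-2d}), one has the unconditional Lipschitz bound
\begin{align*}
\norm{\mu_1-\mu_2}_{H^4(\Omega)} \leq C \norm{u_1-u_2}_{H^5(\Omega)^2}.
\end{align*}
This is precisely the Lipschitz stability of the nonlinear inversion $\mathcal{F}^{-1}$ required for the abstract theorem. I would then point out that, by continuity, the non-degeneracy assumption (\ref{cond-2d}) and the constraint $\mu_1=\mu_{\rm tr}$ on $\partial\Omega$ are stable under small $H^4$ perturbations of $\mu_{\rm tr}$, so the stability bound applies uniformly along the iterates $\mu_n$ provided we remain in a small $H^4$-ball around $\mu_{\rm tr}$.

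Next I would verify that $\mathcal{F}: H^4(\Omega) \to H^5(\Omega)^2$ is Fr\'echet differentiable with Lipschitz continuous derivative on this ball. The derivative $D\mathcal{F}[\mu]\,\delta\mu$ solves a linearized Stokes problem with source $2\nabla\cdot(\delta\mu\, \nabla^s u)$ and homogeneous Dirichlet data; standard elliptic regularity for Stokes together with the $\mathcal{C}^4$-regularity assumption on $\mu$ yields continuity of $D\mathcal{F}[\mu]$ as a map $H^4(\Omega)\to H^5(\Omega)^2$, and a subtraction argument gives the Lipschitz dependence of $D\mathcal{F}[\mu]$ on $\mu$. The adjoint $(D\mathcal{F}[\mu])^*$ has already been identified in Section \ref{secfinal} with the symmetric bilinear expression $\nabla^s v:\nabla^s u$, from which one obtains the standard boundedness estimate for $(D\mathcal{F}[\mu])^*$ between the dual Sobolev spaces involved.

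The hard part, and the one I would devote the most attention to, is the so-called tangential cone (or nonlinearity) condition required by the Landweber scheme, namely
\begin{align*}
\norm{\mathcal{F}[\mu_1]-\mathcal{F}[\mu_2] - D\mathcal{F}[\mu_2](\mu_1-\mu_2)}_{H^5} \leq \eta \norm{\mathcal{F}[\mu_1]-\mathcal{F}[\mu_2]}_{H^5}
\end{align*}
with $\eta < 1/2$ uniformly on a neighborhood of $\mu_{\rm tr}$. The left-hand side can be estimated directly by expanding the Stokes equation and using the Lipschitz continuity of $D\mathcal{F}[\mu]$ to get an $O(\norm{\mu_1-\mu_2}_{H^4}^2)$ bound; the Lipschitz stability estimate above then converts this quadratic remainder into $O(\norm{\mu_1-\mu_2}_{H^4}\,\norm{u_1-u_2}_{H^5})$, which is of the form $\eta \norm{\mathcal{F}[\mu_1]-\mathcal{F}[\mu_2]}_{H^5}$ with $\eta$ small whenever $\epsilon_0$ is small. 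Choosing the step size $\sigma$ smaller than $1/\norm{D\mathcal{F}[\mu]}^2$ on the ball, one then invokes \cite[Appendix A]{laure} to conclude $\mu_n \to \mu_{\rm tr}$ in $H^4(\Omega)$, which finishes the proof.
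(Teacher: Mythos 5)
Your proposal follows the same route as the paper, whose entire proof of this theorem is the citation of the abstract Landweber convergence result in \cite[Appendix A]{laure}, applied on the strength of the Lipschitz stability estimate of Theorem \ref{conditional2} with $K$ trivial. The verifications you supply explicitly --- Fr\'echet differentiability of $\mathcal{F}$ and the tangential cone condition obtained by converting the quadratic Taylor remainder via the stability bound --- are precisely the hypotheses that citation is meant to cover, so your write-up is a correct, more detailed rendering of the paper's argument.
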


In three dimensions, we should modify the discrepancy function $\mathcal{J}$ as follows
$$
\mathcal{J}[\mu]=\frac{1}{2} \bigg[ \int\limits_{\Omega}\sabs{u-u_{m}}^2\,dx  +  \int\limits_{\Omega}\sabs{\tilde{u}-\tilde{u}_{m}}^2\,dx\bigg],
$$
where $u_m$ and $\tilde{u}_m$ corresponds to two different boundary conditions $F=g$ and $F=\tilde{g}$ and are such that condition (\ref{stabilizerscond}) holds. Accordingly, $\mathcal{F}$ should be changed to
$$
\mathcal{F}: \mu \mapsto \left( \begin{array}{l} u \\ \tilde{u} \end{array} \right),
$$
where $u$ and $\tilde{u}$ are respectively the solutions to the Stokes system with boundary conditions $g$ and $\tilde{g}$.
The following convergence result holds true in three dimensions. Its proof is exactly the same as in the two-dimensional case.  It is worth emphasizing that the theorem requires adding two energy functionals for the iteration to converge.
\begin{thm}
Let $d=3$.
Assume that the assumptions of Theorem \ref{conditional} are satisfied and $K$ is trivial.
If, for sufficiently small $\epsilon_0$,
\begin{align*}
\norm{\mu_0-\mu_*}_{H^4(\Omega)}<\epsilon_0,
\end{align*}
then we have
\begin{align*}
\norm{\mu_n-\mu_{\rm tr}}_{H^4(\Omega)} \rightarrow 0 \qquad \mathrm{as} \, \, \,  n\rightarrow \infty.
\end{align*}
\end{thm}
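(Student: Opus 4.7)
The plan is to invoke the standard convergence theorem for nonlinear Landweber iteration (in the spirit of Hanke--Neubauer--Scherzer, as adapted in \cite{laure}), whose hypotheses are: (i) Fréchet differentiability of the forward map $\mathcal{F}$ with a locally Lipschitz derivative, and (ii) the tangential cone (nonlinearity) condition in a neighborhood of $\mu_{\rm tr}$. Once these are verified, combined with triviality of $K$ and the Lipschitz stability estimate (\ref{mainest}) from Theorem \ref{conditional}, the iteration started close enough to $\mu_{\rm tr}$ converges. Since the modified map $\mathcal{F}:\mu\mapsto(u,\tilde u)$ now has two components paired to the augmented energy $\mathcal{J}$, all inputs to the abstract theorem are already prepared.

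First I would verify the analytic properties of $\mathcal{F}$. Viewing $\mathcal{F}$ as a map from an $H^4(\Omega)$-neighborhood of $\mu_{\rm tr}$ into $H^5(\Omega)^3\times H^5(\Omega)^3$, the standard elliptic regularity estimates for the Stokes system with $\mathcal{C}^4$ coefficients (cf.\ Section \ref{sec2}) yield continuity of $\mathcal{F}$ and, by formally differentiating the Stokes system in $\mu$, Fréchet differentiability. The derivative $D\mathcal{F}[\mu]\delta\mu$ is itself the solution pair of a Stokes problem whose source depends linearly on $\delta\mu$ and on $(u,\tilde u)$; its Lipschitz continuity in $\mu$ follows from the same estimates.

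The main step is to establish the tangential cone inequality from Theorem \ref{conditional}. A standard Taylor expansion gives
\[
\mathcal{F}[\mu_1]-\mathcal{F}[\mu_2]=D\mathcal{F}[\mu_2](\mu_1-\mu_2)+R(\mu_1,\mu_2),\qquad \|R(\mu_1,\mu_2)\|\leq C\|\mu_1-\mu_2\|_{H^4}^{2}.
\]
Because $K$ is trivial, estimate (\ref{mainest}) reads $\|\mu_1-\mu_2\|_{H^4}\leq C'\,\|\mathcal{F}[\mu_1]-\mathcal{F}[\mu_2]\|_{H^5\times H^5}$, so the remainder is bounded by $C''\|\mu_1-\mu_2\|_{H^4}\cdot\|\mathcal{F}[\mu_1]-\mathcal{F}[\mu_2]\|$. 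Consequently, in a sufficiently small $H^4$-neighborhood of $\mu_{\rm tr}$, one obtains the tangential cone condition
\[
\|\mathcal{F}[\mu_1]-\mathcal{F}[\mu_2]-D\mathcal{F}[\mu_2](\mu_1-\mu_2)\|\leq \eta\,\|\mathcal{F}[\mu_1]-\mathcal{F}[\mu_2]\|
\]
with any prescribed $\eta<1/2$.

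With (i) and (ii) in hand, the proof of convergence is a verbatim repetition of the two-dimensional argument and of the abstract scheme in \cite[Appendix A]{laure}: one proves monotonic decay of $\|\mu_n-\mu_{\rm tr}\|_{H^4}$, uses the tangential cone inequality to conclude $\|\mathcal{F}[\mu_n]-\mathcal{F}[\mu_{\rm tr}]\|\to 0$, and then transfers this to $\|\mu_n-\mu_{\rm tr}\|_{H^4}\to 0$ by a second application of (\ref{mainest}). The only obstacle is the quantitative choice of neighborhood in step (ii): the quadratic remainder must be dominated by the residual to achieve $\eta<1/2$, which is possible precisely because the stability estimate of Theorem \ref{conditional} is Lipschitz rather than merely Hölder. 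No genuinely new work beyond the two-dimensional case is required, aside from carrying two boundary excitations $(g,\tilde g)$ through the argument so as to exploit the augmented stability bound.
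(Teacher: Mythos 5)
Your proposal is correct and takes essentially the same route as the paper, which proves this theorem purely by invoking the abstract Landweber convergence analysis of \cite[Appendix A]{laure} together with the Lipschitz stability estimate of Theorem \ref{conditional} (declaring the proof ``exactly the same as in the two-dimensional case''). What you add is an explicit verification of the hypotheses that the paper delegates to that citation --- Fr\'echet differentiability of $\mathcal{F}$, Lipschitz continuity of $D\mathcal{F}$, and the tangential cone condition deduced from the Lipschitz (rather than H\"older) stability bound --- which is exactly the content of the cited appendix.
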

%\begin{proof}
%Apply the results of \cite[Appendix A]{laure} with $x=\mu$, $X=H^4(\Omega)$, and
%\begin{align}
%Y=\{u: \textrm{the rows of} \,\, \nabla^su \,\, \textrm{are in} \,\, H^4(\Omega)^3 \}
%\end{align}
%yields the desired result.
%\end{proof}

It is worth noticing that a good initial guess for the reconstruction problem of $\mu$ in both the two and three dimensional cases was recently found in \cite{zhou}.

\section{Concluding Remarks}

In this paper we have derived Lipschitz stability estimates for the reconstruction of shear modulus distributions from internal measurements of displacement fields. Our estimates yield to a convergence result for the Landweber iteration scheme. It would be very interesting to make use of multifrequency measurements in order to remove the kernel $K$ of the over-determined system $(\mathcal{L},\mathcal{B})$. Another challenging problem is to extend the present analysis to anisotropic shear modulus distributions. These important problems will be the subject of a future work.

\renewcommand\refname{\large References}
\bibliographystyle{abbrv}
\bibliography{elastography}
%% This BibTeX bibliography file was created using BibDesk.
%% http://bibdesk.sourceforge.net/

%% Saved with string encoding Unicode (UTF-8)

% @preamble{"\def\cprime{$'$} "}
\end{document}